\documentclass[11pt]{article}
\usepackage{amssymb,amsmath,latexsym,amsthm,array}
\usepackage{booktabs}
\usepackage{url}
\usepackage{multirow}
\usepackage{float}

\usepackage[linesnumbered, lined,boxed, commentsnumbered]{algorithm2e}
\usepackage{xcolor}

\newenvironment{frcseries}{\fontfamily{frc}\selectfont}{}

\newtheorem{thm}{Theorem}
\newtheorem{lemma}[thm]{Lemma}
\newtheorem{cor}[thm]{Corollary}

\theoremstyle{definition}
\newtheorem{defn}[thm]{Definition}
\newtheorem{rem}[thm]{Remark}

\newtheorem{conj}{Conjecture}

\makeatletter

\renewcommand\section{\@startsection {section}{1}{\z@}%
{-30pt \@plus -1ex \@minus -.2ex}%
{2.3ex \@plus.2ex}%
{\normalfont\normalsize\bfseries}}

\renewcommand\subsection{\@startsection{subsection}{2}{\z@}%
{-3.25ex\@plus -1ex \@minus -.2ex}%
{1.5ex \@plus .2ex}%
{\normalfont\normalsize\bfseries}}
\renewcommand{\@seccntformat}[1]{\csname the#1\endcsname. } 

\makeatother

\begin{document}

\begin{center}
\uppercase{\bf   On Hales-Jewett  and Related Numbers}
\vskip 20pt
{\bf Nathan Conlon}\footnote{The first author's contribution was done as part of an undergraduate
thesis in 2021 while an undergraduate at Colgate University
under the advisorship of the third author.}\\
{\it University of California, San Diego, San Diego, California}\\ 
{\tt nconlon@ucsd.edu}\\
\vskip 10pt
{\bf Nayda Farnsworth}\footnote{The second author's contribution was done as part of an undergraduate
thesis and independent study in 2025-26 at Colgate University
under the advisorship of the third author.}\\
{\it Colgate University, Hamilton, New York}\\ 
{\tt nfarnsworth@colgate.edu}\\
\vskip 10pt
{\bf Aaron Robertson} \\
{\it Department of Mathematics,}
{\it Colgate University, Hamilton, New York}\\ 
{\tt arobertson@colgate.edu}\\
\end{center}

\begin{abstract}
\noindent
We investigate the Hales-Jewett numbers and some variants of them,
both computationally and via enumerative and probabilistic arguments.  
In particular, we give the improved
lower bound formula on the combinatorial-geometric
variant of the Hales-Jewett numbers.
{As a consequence of one of the variants investigated, we show
that the Milton Bradley game Connect Four played with 2 players
on a 5-dimensional hypercube of any side length can end in a draw.
}
\end{abstract}

\section{Introduction}

The Hales-Jewett Theorem is a cornerstone in Ramsey Theory, asserting that for any  $k, r \in \mathbb{Z}^+$, there exists a dimension $n$ such that for all $d \geq n$, every $r$-coloring of the discrete hypercube $[k]^d$ admits a monochromatic combinatorial line. Analogously, this result may be viewed as a high-dimensional generalization of Tic-Tac-Toe: in sufficiently many dimensions, a draw is impossible. 

 The least such dimension $n$ is called the \textit{Hales-Jewett number}, denoted $HJ(k;r)$. These numbers are notoriously difficult to determine, with the only known nontrivial value being $HJ(3;2) = 4$, established by Hindman and Tressler \cite{HT}. Even for small dimensions, the search space grows astronomically. For example, in the case of 2-colorings of $[4]^4$, there are $2^{256} \approx 1.16 \times 10^{77}$ possible colorings, a quantity comparable to the number of atoms in the observable universe. 

This combinatorial explosion, together with a lack of general structural techniques, has left most Hales-Jewett numbers' values poorly understood. Aside from a handful of small cases, both exact values and sharp bounds remain largely unknown. 

In this work, we approach these problems by investigating related numbers and using efficient computational techniques. We encode Hales-Jewett instances as Boolean formulas and leverage  SAT solvers to systematically search for colorings that avoid monochromatic lines. 

We introduce a method for extending combinatorial constructions reliant on van der Waerden numbers via SAT solvers, which yields the new bound $HJ(3;3) \geq 14$.

In addition to our computational results, we introduce  off-diagonal Hales-Jewett numbers and  investigate the combinatorial-geometric variant. We establish new exact values and improved bounds for these numbers using combinatorial, probabilistic, and computational methods.

{
One of the variants, the combinatorial-geometric
variant with partial lines, allows us to investigate the Milton Bradley game Connect Four.  In particular, we
show that, unlike Tic-Tac-Toe where we are
guaranteed a winner on a $3 \times 3 \times 3$ board, Connect Four may end in a tie on
any sized board in up to 5 dimensions.
}


\subsection{Definitions and Notations}

We first introduce notation relevant to discussing the Hales-Jewett Theorem formally.

For $k \in \mathbb{Z}^+$, let $[k] = \{1,2,\dots,k\}$ and write $[k]^n$ for the set of points $p=(p_1,\dots,p_n)$ with $p_i \in [k]$. We view $[k]^n$ as the  discrete $n$-dimensional hypercube of side length $k$.

\begin{defn}[Variable Word, Combinatorial Line]
    A \emph{variable word} is a vector $w(x) \in ([k]\cup\{x\})^n$ containing at least one occurrence of $x$. Let $S(w)=\{i \in [n] : w_i=x\}$ denote the set of variable positions. For $a \in [k]$, define $w(a)\in [k]^n$ by replacing each occurrence of $x$ in $w(x)$ with $a$.
The \emph{combinatorial line} determined by $w(x)$ is
$
L(w)=\{w(a) : a \in [k]\}.
$
\end{defn}

The formal definition of the Hales-Jewett Theorem and Hales-Jewett Number $HJ(k;r)$ follow.

\begin{thm}[Hales-Jewett Theorem]\label{thm2}
For every $k,r \in \mathbb{Z}^+$, there exists a positive integer $HJ(k;r)$ such that for all $n \ge HJ(k;r)$, every $r$-coloring of the points in the discrete hypercube $[k]^n$ admits a monochromatic combinatorial line.
\end{thm}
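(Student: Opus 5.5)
We prove the Hales-Jewett Theorem by Shelah's argument, which proceeds by induction on $k$ with $r$ fixed but arbitrary. We also note monotonicity in $n$: if every $r$-coloring of $[k]^n$ has a monochromatic line, then so does every $r$-coloring of $[k]^{d}$ for $d\ge n$. To see this, restrict the coloring to the copy of $[k]^n$ obtained by fixing the last $d-n$ coordinates to $1$; a line there extends to a line of $[k]^d$ by appending those fixed coordinates. So it suffices to find one dimension $N$ that works, and then let $HJ(k;r)$ be the least such $N$. The base case $k=1$ is trivial, since $[1]^1$ is itself a single-point line, so $HJ(1;r)=1$.

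For the inductive step, assume $N_0=HJ(k-1;r)$ exists and choose dimensions $n_1,\dots,n_{N_0}$ recursively so that the Shelah pigeonhole lemma applies to each block. The lemma we use is the following. Given a coloring $\chi$ of $[k]^{n}$ with $n=n_1+\cdots+n_{N_0}$, we write a point as a concatenation of blocks $B_1\cdots B_{N_0}$ with $B_i\in[k]^{n_i}$. Within block $i$, consider the $n_i+1$ words $u_j=k^{j}(k-1)^{n_i-j}$ for $0\le j\le n_i$. Choose $n_i$ so large, as a function of $r$, $k$, and $n_{i+1},\dots,n_{N_0}$ (after first fixing earlier blocks), that by pigeonhole there are $j<j'$ such that $u_j$ and $u_{j'}$ induce the same coloring of all completions. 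The number of such colorings is at most $r^{k^{n-n_i}}$, so $n_i+1>r^{k^{\,n-n_i}}$ suffices, where the earlier blocks are handled by an ordering of the choices. We then define a variable word in block $i$ by $k^{j}x^{j'-j}(k-1)^{n_i-j'}$; its values at $x=k-1$ and $x=k$ receive the same color whatever the other blocks are.

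We process the blocks from last to first, or equivalently pick each $n_i$ large relative to the later ones. This yields variable words $w_1,\dots,w_{N_0}$ such that the map $\phi:[k]^{N_0}\to[k]^n$, $(a_1,\dots,a_{N_0})\mapsto w_1(a_1)\cdots w_{N_0}(a_{N_0})$, has the property that $\chi\circ\phi$ is unchanged when any single coordinate $a_i$ is switched between $k-1$ and $k$. Iterating this, points differing only by replacing some $k$'s with $k-1$'s get the same color. Now restrict $\chi\circ\phi$ to $[k-1]^{N_0}$. By the inductive hypothesis there is a monochromatic line $L(v)$ there with $v\in([k-1]\cup\{x\})^{N_0}$. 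The point $v(k)$ differs from $v(k-1)$ only by changing some $(k-1)$'s to $k$'s, so it has the same color. Hence $L(v)$ in $[k]^{N_0}$ is monochromatic, and its image under $\phi$, namely the line of the variable word $w_1(v_1)\cdots w_{N_0}(v_{N_0})$ with $x$ substituted blockwise, is a monochromatic combinatorial line in $[k]^n$.

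The main obstacle is the bookkeeping in the pigeonhole step. The blocks must be chosen in an order such that, when block $i$ is fixed, the "coloring of completions" ranges over a set whose size does not depend on $n_i$ itself. The cleanest way is to choose $n_{N_0}$ first and then go backwards, with each stage quantifying over all values of the earlier blocks. If that ordering becomes tangled, the fallback is the classical color-focusing proof, or the Graham–Rothschild parameter-word induction.
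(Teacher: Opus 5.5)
The paper does not prove this theorem; it only quotes it as the classical result. So the comparison is with the standard proofs. Your skeleton is Shelah's argument, and most of it is correct: monotonicity in $n$, the base case $k=1$, the pigeonhole on $u_j=k^j(k-1)^{n_i-j}$ producing the word $k^jx^{j'-j}(k-1)^{n_i-j'}$, the switching property, and the lift of a monochromatic line of $[k-1]^{N_0}$ to $[k]^{N_0}$ and then through $\phi$.

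The gap is the dimension bookkeeping you yourself flag as the main obstacle, and what you wrote does not close it.

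\begin{itemize}
\item The rule $n_i+1>r^{k^{n-n_i}}$ cannot hold for all $i$ at once. For $N_0\ge 2$ and $r\ge 2$, it gives $n_1+1>r^{k^{n_2}}\ge n_2+2$ and symmetrically $n_2+1>r^{k^{n_1}}\ge n_1+2$, forcing $n_1>n_2>n_1$.
\item Your repair, ``choose $n_{N_0}$ first and go backwards, quantifying over all values of the earlier blocks,'' is still circular. All values of the earlier blocks form $[k]^{n_1+\cdots+n_{i-1}}$, and those dimensions are chosen \emph{after} $n_i$.
\item The real constraint is not that the count avoid $n_i$ itself, which it trivially does. It is that the count avoid the dimensions not yet chosen.
\item ``Process the blocks from last to first'' and ``pick each $n_i$ large relative to the later ones'' are not equivalent. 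The dimensions must be chosen in one direction and the words built in the other.
\end{itemize}

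The missing idea is this: when block $i$ is treated, the earlier blocks should range only over the $k^{i-1}$ images of the variable words already built, not over all their points. Concretely, choose $n_{N_0},n_{N_0-1},\dots,n_1$ in that order with
\[
n_i+1>r^{k^{\,i-1+n_{i+1}+\cdots+n_{N_0}}}.
\]
Then build $w_1,w_2,\dots,w_{N_0}$ in that order. At stage $i$, apply pigeonhole to the $n_i+1$ functions
\[
(a_1,\dots,a_{i-1},z)\longmapsto \chi\bigl(w_1(a_1)\cdots w_{i-1}(a_{i-1})\,u_j\,z\bigr),\qquad a_\ell\in[k],\ z\in[k]^{n_{i+1}+\cdots+n_{N_0}},
\]
of which there are at most $r^{k^{\,i-1+n_{i+1}+\cdots+n_{N_0}}}$. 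The resulting $w_i$ gives equal colors at $x=k-1$ and $x=k$ for all such $a_\ell$ and $z$. In particular this holds for $z=w_{i+1}(a_{i+1})\cdots w_{N_0}(a_{N_0})$, which is exactly the switching property of $\chi\circ\phi$ you use.

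The mirror version also works: choose $n_1$ first, build $w_{N_0}$ first, and restrict the later blocks to word images. With this fix your proof is complete. It inducts on $k$ alone with $r$ fixed and gives primitive recursive bounds.
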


The variants of the Hales-Jewett numbers that we investigate in this paper are defined in Table~\ref{tab:HJnotation}, below.

\begin{table}[ht!]\small
\centering
\renewcommand{\arraystretch}{1.2}
\begin{tabular}{r| l | l}
\toprule
\textbf{Notation} & \textbf{Description} &\textbf{Paper Location} \\
\midrule
$HJ(k;r)$ & Classical $r$-color Hales-Jewett\\& number for combinatorial lines &Theorem \ref{thm2}\\

$HJ(k,\ell;2)$ & Off-diagonal 2-color Hales-Jewett\\& number (combinatorial lines)&Definition \ref{def11} \\

$HJ^*(k;r)$ & $r$-color CG-Hales-Jewett number (for\\& combinatorial-geometric (cg) lines)&Definition \ref{def20} \\

$HJ^*(k,\ell;2)$ & Off-diagonal 2-color CG-Hales-Jewett\\& number &Definition \ref{def20}\\

$G(k;r)$ & $r$-color Hales-Jewett number for\\& geometric lines&Theorem \ref{thm21} \\

$G(k,\ell;2)$ & Off-diagonal 2-color Hales-Jewett number\\& for geometric lines&Theorem \ref{thm21} \\
$CF(k;r)$&Connect Four number&Definition \ref{def24}\\
\bottomrule
\end{tabular}
\caption{Notations for Hales-Jewett Variants}
\label{tab:HJnotation}
\end{table}

\subsection{Known Results}

We start by summarizing the known  values of the classical Hales-Jewett numbers. 

It is straightforward to verify that $HJ(2;r) = r$ for all $r \in \mathbb{Z}^+$, a consequence of a simple combinatorial argument. The first nontrivial case was resolved by Hindman and Tressler \cite{HT}, who showed that $HJ(3;2) = 4.$
Their proof combines combinatorial argument with computer-assisted verification.

Bounds for the next smallest Hales-Jewett numbers are extremely broad. We provide them below. 
\[
12 \le HJ(4;2) \le 10^{11}; \qquad 13 \le HJ(3;3) \le 3^{36}+732 \approx 10^{17.2}.
\]
 The upper bounds on $HJ(4;2)$ and $HJ(3;3)$ are due to work by Lavrov \cite{Lavrov} and D. Conlon \cite{Conlon}, respectively. The lower bounds on these Hales-Jewett numbers are due to a lemma on the van der Waerden numbers (see Corollary \ref{vdwmapcor}, below), defined next. 

 \begin{rem} Very recently a preprint \cite{MH} gave the improved lower bounds $HJ(3;3) \geq 22$ and $HJ(4;2) \geq 14$.
\end{rem}

\begin{defn}[Van der Waerden Number]
For integers $r\in \mathbb{Z}^+$ and $k_1,k_2,\dots,k_r$, the \emph{van der Waerden number} $w(k_1,k_2,\dots,k_r;r)$ is the smallest
integer $N$ such that every $r$-coloring of $\{1,2,\dots,N\}$ admits a monochromatic $k_i$-term arithmetic progression of color $i$ for
some $i \in \{1,2,\dots,r\}$.  In the situation where
$k=k_1=k_2=\cdots=k_r$ we use the notation $w(k;r)$.
\end{defn}

The lemma by which we obtain van der Waerden-based lower bounds on Hales-Jewett numbers follows.

\begin{lemma}\label{genlem}
Let $f(x): \mathbb{Z}^{\geq 0} \rightarrow \mathbb{Z}^{\geq 0}$.
Let $k,r,d\in\mathbb{Z}^+$ and define
$$N = 1+d \cdot\max_{0 \leq i \leq k-1} f(i).$$ If there exists an $r$-coloring of
$[1,N]$ that avoids  monochromatic sequences of the form
$a+bf(i), i=0,1,\dots,k-1$, then $HJ(k;r)>d$.
\end{lemma}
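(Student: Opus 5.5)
The plan is to pull the given coloring of $[1,N]$ back to $[k]^d$ through an additive map. The map will send each combinatorial line to a sequence of the forbidden form $a+bf(i)$. This exhibits an $r$-coloring of $[k]^d$ with no monochromatic combinatorial line. Since $HJ(k;r)$ is the least $n$ such that every dimension $\ge n$ forces a monochromatic line, failure at dimension $d$ gives $HJ(k;r)>d$.

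Concretely, define $\phi:[k]^d\to\mathbb{Z}$ by
\[
\phi(p_1,\dots,p_d)=1+\sum_{j=1}^{d} f(p_j-1).
\]
Because $f\ge 0$, we have $1\le \phi(p)\le 1+d\max_{0\le i\le k-1}f(i)=N$, so $\phi$ maps $[k]^d$ into $[1,N]$. Let $\chi$ be the assumed $r$-coloring of $[1,N]$, and color $p\in[k]^d$ by $\chi(\phi(p))$.

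Now take a variable word $w(x)$ with variable set $S=S(w)\neq\emptyset$. The coordinates outside $S$ are fixed, so we can set
\[
a=1+\sum_{j\notin S} f(w_j-1), \qquad b=|S|\ge 1.
\]
Then $\phi(w(i+1))=a+b f(i)$ for $i=0,1,\dots,k-1$. Hence the line $L(w)$ maps onto the sequence $a+bf(i)$, $i=0,\dots,k-1$, and every term lies in $[1,N]$. If $L(w)$ were monochromatic under the pulled-back coloring, this sequence would be monochromatic under $\chi$, contradicting the hypothesis. So $[k]^d$ has an $r$-coloring with no monochromatic combinatorial line, and therefore $HJ(k;r)>d$.

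The main point needing care is interpretive rather than technical. We must make sure the hypothesis forbids exactly the sequences produced, namely those with $a\ge 1$ and an integer $b$ with $1\le b\le d$, all of whose terms lie in $[1,N]$. We also should not require the terms $a+bf(i)$ to be distinct: if $f$ repeats values, the image of a line may collapse, but the argument only uses that the multiset of colors is constant, so it still works. I would state this reading of ``sequences of the form'' explicitly.
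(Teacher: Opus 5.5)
Your proof is correct and is essentially the paper's argument. The paper also colors $[0,k-1]^d$ by pulling back along $(a_1,\dots,a_d)\mapsto 1+\sum_i f(a_i)$, and notes that each variable word maps to $a+bf(x)$, so a monochromatic line would give a forbidden monochromatic sequence. Your version makes explicit details the paper leaves implicit: the shift from $[k]$ to $[0,k-1]$, that $b=|S(w)|$, that the image lies in $[1,N]$, and why a good coloring in the single dimension $d$ gives $HJ(k;r)>d$.
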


\begin{proof}
Use the mapping $(a_1,a_2,\dots,a_d) \in [0,k-1]^d$ to $1+\sum_{i=1}^d f(a_i) \in [1,N]$ and color the points in the hypercube via the color under this mapping.
Note that a variable word $w(x)$ maps to $a+bf(x)$ for some positive integers $a$ and $b$.
Evaluating this word at $x=0,1,\dots,k-1$, we get
a sequence of the form $a+bf(i), i=0,1,\dots,k-1$.
Since the coloring of $[1,N]$ avoids monochromatic such progressions,
every combinatorial line is non-monochromatic. 
\end{proof}

We often use $f(x)=x$ in Lemma \ref{genlem} and, as such,
make the following definition.

\begin{defn}[Van der Waerden Mapping]\label{def:vdwmap}
    We will refer to the mapping from $[0,k-1]^d$ to $[1,1+d(k-1)]$
    given by
    \[(a_1, \dots, a_d) \mapsto 1 + \sum_{i=1}^{d} a_i\]
    as a {\it van
der Waerden mapping}.
\end{defn}

Under the van der Waerden mapping 
with $N=w(k;r)-1$ in Lemma \ref{genlem}  we obtain the following known result.

\begin{cor}\label{vdwmapcor}
We have $HJ(k;r) \geq \left\lfloor\frac{w(k;r)-2}{k-1}\right\rfloor+1.$
\end{cor}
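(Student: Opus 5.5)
We apply Lemma \ref{genlem} with $f(x)=x$, which is exactly the van der Waerden mapping of Definition \ref{def:vdwmap}. For this choice the sequences $a+bf(i)$, $i=0,\dots,k-1$, are $k$-term arithmetic progressions $a, a+b,\dots,a+(k-1)b$ with $b\ge 1$. We also have $\max_{0\le i\le k-1} f(i)=k-1$, so the interval appearing in the lemma is $[1,1+d(k-1)]$.

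The plan has three steps.

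\textbf{Step 1 (the good coloring).} By the definition of $w(k;r)$, there is an $r$-coloring $\chi$ of $[1,w(k;r)-1]$ with no monochromatic $k$-term arithmetic progression. Indeed, $w(k;r)$ is the least $N$ for which every coloring of $[1,N]$ contains one.

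\textbf{Step 2 (choice of $d$).} Set
\[
d=\left\lfloor\frac{w(k;r)-2}{k-1}\right\rfloor .
\]
Then $d(k-1)\le w(k;r)-2$, so $1+d(k-1)\le w(k;r)-1$. Restricting $\chi$ to $[1,1+d(k-1)]$ gives a coloring that still avoids monochromatic $k$-term progressions, since any such progression in the smaller interval would also lie in the larger one.

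\textbf{Step 3 (conclusion).} Lemma \ref{genlem} now yields $HJ(k;r)>d$. Since Hales-Jewett numbers are integers, this is the same as $HJ(k;r)\ge d+1$, which is the claimed bound.

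There is one degenerate case. If $d=0$, the lemma's hypothesis $d\in\mathbb{Z}^+$ fails, but the bound $HJ(k;r)\ge 1$ holds trivially. We also tacitly assume $k\ge 2$ so that the formula is defined.

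The only step needing care is the arithmetic in Step 2: we need the floor to guarantee the image interval fits inside $[1,w(k;r)-1]$. Everything else is a direct application of Lemma \ref{genlem}.
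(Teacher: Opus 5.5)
Your proof is correct and is essentially the paper's argument: apply Lemma~\ref{genlem} with $f(x)=x$ (the van der Waerden mapping) to a valid $r$-coloring of $[1,w(k;r)-1]$. Your explicit choice $d=\lfloor (w(k;r)-2)/(k-1)\rfloor$, with the coloring restricted to $[1,1+d(k-1)]$, spells out what the paper compresses into the phrase ``with $N=w(k;r)-1$,'' and your remark on the $d=0$ case is a harmless addition.
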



\section{Improved Lower Bound}

Our  contribution in the classical Hales-Jewett area is an improved lower bound on $HJ(3;3)$ (see Remark 9, below). In this section, we present the improved bound.  Our approach is to use the van der Waerden mapping to color one slice
of the $[3]^{13}$ hypercube and use a SAT solver to assign colors to
the remaining points of the hypercube.

\begin{thm}\label{thm7} We have $HJ(3;3) \geq 14$.

\end{thm}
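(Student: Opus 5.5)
To prove $HJ(3;3) \geq 14$ we must exhibit a $3$-coloring of $[3]^{13}$ with no monochromatic combinatorial line. The known bound $13$ comes from Corollary \ref{vdwmapcor}: with $w(3;3)=27$ it gives $\lfloor 25/2\rfloor+1 = 13$. That construction colors $[0,2]^{12}$ through the van der Waerden mapping into $[1,25]$, using a $3$-coloring of $[1,26]$ with no monochromatic $3$-term arithmetic progression. In dimension $13$ the target interval would be $[1,27]$, where no such coloring exists. So the plan is to keep the van der Waerden structure on most of the cube and let a SAT solver repair the rest.

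\textbf{Step 1 (slice from the van der Waerden mapping).} Write points of $[3]^{13}$ as $(p,t)$ with $p\in[3]^{12}$ and $t\in[3]$ the last coordinate. On the slice $t=1$ we color $(p,1)$ by $\chi(1+\sum_i(p_i-1))$, where $\chi$ is a $3$-coloring of $[1,25]$ (or $[1,26]$) with no monochromatic $3$-AP. By Lemma \ref{genlem}, every combinatorial line lying inside this slice is non-monochromatic.

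\textbf{Step 2 (SAT encoding of the remaining points).} The other two slices have $2\cdot 3^{12}$ points. To make the problem tractable, we impose symmetry. The coloring is taken to be invariant under permutations of the first $12$ coordinates, so it depends only on the composition $(\#1,\#2,\#3)$ of $p$ together with $t$. Lines are compatible with this reduction: a line's colors depend only on the compositions of its three points. Each remaining point class gets three Boolean variables with an exactly-one constraint. For every combinatorial line type (described by the counts of fixed $1$s, $2$s, $3$s and variable positions among the first $12$ coordinates, and whether coordinate $13$ is fixed or variable) and every color $c$, we add a clause forbidding all three points from having color $c$. Clauses whose points lie in the precolored slice are checked directly and are already satisfied by Step 1.

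\textbf{Step 3 (verification).} We run a SAT solver. If it finds a satisfying assignment, we lift it to a coloring of $[3]^{13}$ and check independently, by brute force over all variable words, that no monochromatic line exists.

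\textbf{Main obstacle.} The risk is that the symmetric, slice-fixed formulation is unsatisfiable. Fixing one slice through the van der Waerden mapping, and forcing full $S_{12}$-symmetry, may be too rigid. The fallbacks, in order, are:
\begin{itemize}
\item vary $\chi$ over the known extremal $3$-AP-free colorings of $[1,26]$;
\item apply the symmetric coloring to a smaller block of coordinates, say $S_{12}$ replaced by $S_6\times S_6$, at the cost of more variables;
\item free the whole cube except for the symmetry, with the van der Waerden coloring used only as a partial assignment to guide the solver.
\end{itemize}
In any of these versions, the final proof is the certificate: an explicit coloring together with the exhaustive line check.
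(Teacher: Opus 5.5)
Your plan has the same skeleton as the paper's proof: precolor the slice $p_{13}=1$ by the van der Waerden mapping and let a SAT solver extend it. The $S_{12}$-symmetry ansatz, however, makes it a genuinely different computation.

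The paper imposes no symmetry. All $2\cdot 3^{12}$ remaining points are free variables, and the run took 8831 CPU hours, yielding nine extensions. Your quotient has only $2\binom{14}{2}=182$ free cells and a few thousand clauses, so it is decided instantly and the certificate is small. The price is the risk you flag, that the ansatz might be unsatisfiable.

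For a suitable $\chi$ that risk does not materialize; I checked by hand that even full $S_{13}$-symmetry works. Let $s(p)=\sum_i(p_i-1)$, and let $c$ be the coloring of $\{0,1,\dots,24\}$ given by $1,1,2,2,1,1,2,3,2,3,3,1,3,1,1,2,1,2,2,3,1,3,3,2,3$, which has no monochromatic $3$-term progression. Color $p\in[3]^{13}$ by $c(s(p))$ if some $p_i=1$, and by $1+(s(p)\bmod 3)$ otherwise.

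Fix a variable word $w$, put $s=s(w(1))$, and let $m\ge 1$ be the number of variable positions.
\begin{itemize}
\item If the fixed part contains a $1$, the line receives $c(s),c(s+m),c(s+2m)$ with $s+2m\le 24$. This is a genuine $3$-term progression, so it is not monochromatic.
\item Otherwise $w(1)$ receives $c(s)$, while $w(2)$ and $w(3)$ receive $1+((s+m)\bmod 3)$ and $1+((s+2m)\bmod 3)$. These agree only when $3\mid m$, and then both equal $1+(s\bmod 3)$. But $c(s)\neq 1+(s\bmod 3)$ for every $1\le s\le 24$, and $s=0$ would force $m=13$, which is not a multiple of $3$.
\end{itemize}

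So your symmetric route succeeds and collapses to an explicit, hand-checkable coloring. The paper's unrestricted extension needs no structural guess, but pays for that with a very large computation.
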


\begin{proof}
We obtain a monochromatic line-avoidant coloring of $[3]^{12}$ via the van der Waerden mapping described in Corollary~\ref{vdwmapcor}. We refer to this coloring as $C_{12}$.

We then define a partial coloring of $[3]^{13}$ by
\[
C_{13}(p_1,\dots,p_{12}, 1) = C_{12}(p_1,\dots,p_{12}),
\]
which colors one 12-dimensional slice of the 13-dimensional hypercube.

We encode the partial coloring of $[3]^{13}$ as a Boolean formula under a partial assignment (see Section 6) and use a SAT solver to determine whether it can be extended fully. The resulting formula is satisfiable and yields nine monochromatic line-avoidant colorings, completing the proof. The compute time for this instance was 8831 CPU hours, equivalent to 367 days. We completed the computation using CryptoMiniSat and distributed the instance across 15 cores. 
\end{proof}

\begin{rem}
     To the best of our knowledge, this is the first
instance of a lower bound on the Hales-Jewett numbers that is larger
than that given by the van der Waerden mapping and Corollary \ref{vdwmapcor}.
However, while writing this paper, the authors
discovered that Mouhib and Halbeisen \cite{MH}
proved that $HJ(3;3) \geq 22.$ In their paper,
they acknowledge Theorem \ref{thm7}, which was
presented by the second author at the Experimental Math Seminar at Rutgers.  We have
verified their improved bound.
\end{rem}

The lower bound $HJ(3;3) \geq 14$ serves as a primary demonstration of the effectiveness of this approach.

\section{Off-Diagonal Hales-Jewett Numbers}

Rather than requiring all colors to avoid monochromatic combinatorial lines, we consider colorings in which one color avoids full lines while others avoid partial lines. This formulation may be viewed as a higher-dimensional analogue of the off-diagonal van der Waerden number $w(k_1,k_2,\dots,k_r;r)$.

From a computational perspective, such off-diagonal variants often admit more tractable encodings, making them a natural setting in which to develop and test algorithmic approaches to Hales-Jewett-type problems. We begin by defining partial combinatorial lines.

\begin{defn}[Partial Combinatorial Line]
Let $n,k \in \mathbb{Z}^+$, and let $L(w)=\{w(a):a\in[k]\}$ be a combinatorial line in $[k]^n$.  
A \emph{partial combinatorial line of length $m$} is any subset of the form
$
\{\,w(a): a \in \{i+1,\dots,i+m\}\,\},
$
where $0 \le i \le k-m$. We refer to this subset as a \textit{(combinatorial) $m$-line.}
\end{defn}

We now define the off-diagonal Hales-Jewett numbers.

\begin{defn}[Off-Diagonal Hales-Jewett Number]\label{def11}
For $r \in \mathbb{Z}^+$, let $k_1,k_2,  \dots, k_r \in \mathbb{Z}^+$
and define $k = \max_i k_i$.
The \emph{$r$-color off-diagonal Hales-Jewett number}\break$HJ(k_1,k_2,\dots,k_r;r)$ is the least integer $n$ such that every $r$-coloring of $[k]^n$ contains  
a monochromatic $k_i$-line of color $i$ for some $i \in \{1,2,\dots,r\}$.
When $r = 2$, we may write $HJ(k,\ell)$ in place of $HJ(k,\ell;2)$ for brevity.
\end{defn}

We now present results on certain off-diagonal Hales-Jewett numbers. Computational results for
off-diagonal Hales-Jewett numbers can be found in
Section \ref{sec:comp}.

\begin{thm}\label{thm1} For $k \in \mathbb{Z}^+$, we have $HJ(k,2) \leq k$.
\end{thm}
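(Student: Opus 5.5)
The plan is to argue by contradiction, using a staircase family of lines in $[k]^k$ in the spirit of the argument that $HJ(2;r)=r$. Fix a 2-coloring of $[k]^k$ with no combinatorial $k$-line of color~1 and no combinatorial $2$-line of color~2; we aim to derive a contradiction. The basic tool is this: two points $u,v$ form a $2$-line exactly when $v$ is obtained from $u$ by raising, on some nonempty set $S$ of coordinates where $u$ is constant with value $a<k$, every coordinate to $a+1$. So color~2 is an ``independent set'' for this adjacency. Meanwhile, every full combinatorial line must contain at least one point of color~2.

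\emph{Step 1 (chains are almost all color 1).} For fixed $a\in[k-1]$ and $0\le j\le k$, let $p^{(a)}_j$ be the point with the first $j$ coordinates equal to $a+1$ and the remaining $k-j$ coordinates equal to $a$. For $i<j$, the points $p^{(a)}_i$ and $p^{(a)}_j$ differ exactly on coordinates $i+1,\dots,j$, where they take the values $a$ and $a+1$. Hence they form a $2$-line. Consequently each chain $p^{(a)}_0,\dots,p^{(a)}_k$ contains at most one point of color~2. I would also record the analogous statement for chains built from ``prefix equal to $c$, suffix equal to $d$'' with $c=d\pm1$, which is the general form of the same observation.

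\emph{Step 2 (lines forced to hit color 2).} For $j=1,\dots,k$, consider the line $\ell_j$ with variable set $\{1,\dots,j\}$ and remaining coordinates frozen at a suitable value (the natural first choice is the suffix value matching the chain points above). Each $\ell_j$ must contain a color-2 point $u_j$, since it cannot be monochromatic in color~1; the same holds for the main diagonal $(x,x,\dots,x)$. The goal is to choose the frozen values so that the $k$ lines $\ell_j$, together with the diagonal, pass through the chain points $p^{(a)}_j$, and so that their color-2 points fall into at most $k$ chains or ``adjacency classes'' while there are $k+1$ of them.

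\emph{Step 3 (pigeonhole).} By pigeonhole, two of these color-2 points lie in a common chain, or are adjacent in the sense of Step~1. They then form a monochromatic color-2 $2$-line, which is the desired contradiction. This gives $HJ(k,2)\le k$.

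The main obstacle is Step~2: choosing the frozen coordinates of the $\ell_j$ so that \emph{any} color-2 point on $\ell_j$, not just one on the chosen chain, is forced to be $2$-line-adjacent to any color-2 point on $\ell_{j'}$. My first attempt would be to freeze the suffix of $\ell_j$ at the value $x\pm1$ relative to the prefix. This is not literally a combinatorial line, so I would instead replace it by words mixing $x$ with constants so that the prefix-versus-suffix comparison is always ``consecutive''. I would check the construction directly for $k=2$ (where the claim reduces to $HJ(2;2)=2$) and for $k=3$ before generalizing. If this direct construction fails, the fallback is induction on $k$: restrict to the slab where the last coordinate is $k$ and apply the statement for $k-1$ on the subcube $[k-1]^{k-1}$, then use the extra dimension to extend a color-1 $(k-1)$-line to a full $k$-line, or to produce a color-2 $2$-line.
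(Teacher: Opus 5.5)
Your Step 1 observation is correct, and it is essentially the ingredient the paper uses. Step 2, which you flag yourself as the main obstacle, is never carried out, though, and the version you sketch cannot work. On a line $\ell_j$ with variable set $\{1,\dots,j\}$ and frozen suffix $\sigma_j$, the color-2 point is $(b,\dots,b,\sigma_j)$ (first $j$ coordinates equal to $b$) for some $b\in[k]$ that nothing in your setup controls. The two points of a $2$-line differ by exactly $1$ on every coordinate where they differ. So color-2 points on $\ell_j$ and $\ell_{j'}$ whose $b$-values differ by $2$ or more are never $2$-line-adjacent, and no choice of frozen values gives the pairwise adjacency your Step 3 pigeonhole needs. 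Also, $\ell_k$ has no frozen coordinates and is the diagonal itself, so you have $k$ lines, not $k+1$. The induction fallback is likewise only a hope. The hypothesis on a slab gives a color-1 $(k-1)$-line whose extension point $w(k)$ must then be color 2, but nothing forces two such points to form a $2$-line.

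The missing idea is to use your chain in dimension $k-1$ and let only the \emph{last} coordinate vary. Fix $a\in[k-1]$ and set $s_i=(a^{k-i},(a+1)^{i-1})\in[k]^{k-1}$ for $1\le i\le k$, where $a^j$ denotes $j$ copies of $a$; these $k$ points pairwise form $2$-lines. Each column $\{(s_i,y):y\in[k]\}$ is a combinatorial line, so it contains a color-2 point at some height $y_i$. If $y_i=y_j$ with $i<j$, the word $(a^{k-j},x^{j-i},(a+1)^{i-1},y_i)$ gives a color-2 $2$-line. So the pigeonhole is on heights, and it yields that $i\mapsto y_i$ is a bijection onto $[k]$, not an immediate contradiction.

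The contradiction comes from a color-2 diagonal point $(c,\dots,c)$:
\begin{itemize}
\item If $c<k$, take $a=c$ and $y_1=c$. Some $j\ge 2$ has $y_j=c+1$, and $w(x)=(c^{k-j},x^{j})$ is a color-2 $2$-line.
\item If $c=k$, take $a=c-1$ and $y_k=c$. Some $j\le k-1$ has $y_j=c-1$, and $w(x)=(x^{k-j},c^{j-1},x)$ is a color-2 $2$-line.
\end{itemize}
This final adjacency joins points at different heights $c$ and $c\pm1$, with the free coordinate entering the variable set. Your class-based pigeonhole does not anticipate a step of that kind.
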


\begin{proof} Suppose, for a contradiction, that $\chi$ is a 2-coloring of $[k]^k$ with no
red $k$-line and no blue $2$-line.  Then, for any $a_1,a_2,\dots,a_{k-1} \in [k]$ there exists
$y \in [k]$ such that $\chi(a_1,a_2,\dots,a_{k-1},y)$ is blue; otherwise, we would have a red $k$-line.

Let $a^{j}$ denote a string of $j$ consecutive $a$'s.
Given $a \in [k-1]$, 
choose $y_1,  \dots, y_k \in [k]$ so that
$$
\chi(a^{k-i} ,(a+1)^{i-1},y_i) = \mbox{blue} \quad \mbox{for $i \in [k]$}.
$$
To avoid a blue $2$-line, we must have $y_i \neq y_j$ for $i \neq j$, else (assuming $j>i$)
for the variable word  $w(x) = (a^{k-j}, x^{j-i},(a+1)^{i-1},y_i)$ 
we have that both $w(a)$ and $w(a+1)$ are blue.

Next, we know that for some $c \in [k]$, the point $(c,c,\dots,c)$ is blue.

\noindent
{\tt Case 1.} $c=k$.  Let $x_1,x_2,\dots,x_k$ be distinct integers from $[k]$ such that
$$
\chi((c-1)^{k-i} ,c^{i-1},x_i) = \mbox{blue}, \quad \mbox{$1 \leq i \leq k$}.
$$
By choice of $c$ we may take $x_k=c$.  Since the $x_i$ are distinct,
let $x_j=c-1$ for some $j \in [1,k-1]$.  By considering the variable word $w(x) = (x^{k-j},c^{j-1},x)$ we
see that both $w(c-1)$ and $w(c)$ are blue, thereby creating a blue 2-line, a contradiction.

\noindent
{\tt Case 2.} $c<k$. Let $x_1,x_2,\dots,x_k$ be distinct integers from $[k]$ such that
$$
\chi(c^{k-i} ,(c+1)^{i-1},x_i) = \mbox{blue}, \quad \mbox{$1 \leq i \leq k$}.
$$ 
By choice of $c$ we may take $x_1=c$ and let $x_j=c+1$ for
some $j \in [2,k]$. By considering the
variable word $w(x) = (c^{k-j},x^j)$ we see that both
$w(c)$ and $w(c+1)$ are blue, a contradiction.

Since no valid coloring of $[k]^k$ exists, we have shown $HJ(k,2) \leq k$.
\end{proof}

We now start on determining a good lower bound.

\noindent{\bf Notation.} For $k \in \mathbb{Z}^+$ and $t \in \mathbb{Z}^+ \cup \{0\}$, let $b(k,t)$ be the minimum integer $n$ such that
any $2$-coloring of $[1,n]$ admits either a $k$-ap of the first color or 2 integers $i$ and $j$,
both of the second color, with $|i-j| < k-t$.

\begin{lemma}\label{lem3} Let $k \in \mathbb{Z}^+$.  Let $t \in \mathbb{Z}^+ \cup \{0\}$ be the minimal
integer such that $k-t$ is prime. Then $b(k,t) > (k-t)(k-1)+1$.
\end{lemma}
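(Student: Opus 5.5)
The plan is to exhibit an explicit $2$-coloring of $[1,N]$ with $N=(k-t)(k-1)+1$ that has no $k$-term arithmetic progression in the first color (red) and no two second-color (blue) integers at distance less than $k-t$. Write $p=k-t$, which is prime by hypothesis; in particular $k\ge 2$ and $p\le k$. I will color blue exactly the integers in $[1,N]$ that are congruent to $1$ modulo $p$, namely $1,\,p+1,\,2p+1,\dots,(k-1)p+1=N$, and color everything else red. The offset by $1$ is deliberate, so that the unique progression of difference $p$ that exactly fits in $[1,N]$ is blue rather than red.

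The blue condition is immediate. Any two blue integers differ by a nonzero multiple of $p$, so $|i-j|\ge p=k-t$, and there is no pair at distance less than $k-t$.

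For the red condition, let $a, a+d,\dots,a+(k-1)d$ be a $k$-term progression in $[1,N]$ with $d\ge 1$. I split into two cases.
\begin{itemize}
\item If $p\nmid d$, then since $p$ is prime, $d$ is invertible modulo $p$. Hence the first $p$ terms $a,a+d,\dots,a+(p-1)d$ run through all residues modulo $p$. Because $p\le k$, these are genuinely terms of the progression, so one of them is $\equiv 1 \pmod p$ and is blue.
\item If $p\mid d$, then $d\ge p$ and the progression has span $(k-1)d\ge (k-1)p=N-1$. So it must be exactly $a=1$, $d=p$, i.e., $1,p+1,\dots,N$, and every term of it is blue.
\end{itemize}
In either case the progression is not monochromatic red. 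Thus this coloring of $[1,N]$ avoids both configurations, giving $b(k,t)>N=(k-t)(k-1)+1$.

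The only delicate point is the boundary case $p\mid d$: the progression of difference $p$ fits in $[1,N]$ exactly, so the choice of residue class for blue must be $1$ rather than, say, $0$. Primality of $p$ is used only to guarantee that every $d$ not divisible by $p$ cycles through all residues modulo $p$.
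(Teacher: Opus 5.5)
Your proposal is correct and takes essentially the same approach as the paper: the same coloring (the residue class $1$ modulo $k-t$ is blue, everything else red), and the same two cases. The paper splits into $d<k-t$, handled by invertibility of $d$ modulo the prime, and the boundary case $d=k-t$ with $a=1$. This is exactly your $p\nmid d$ versus $p\mid d$ split once the progression is required to fit in $[1,N]$.
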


\begin{proof} We will provide a valid 2-coloring of $[1,(k-t)(k-1)+1]$.
Consider the 2-coloring of $[1,(k-t)(k-1)+1]$ defined by coloring the congruence class $1$ modulo $k-t$
blue and all other integers red.  Since this coloring has no two   blue
integers with difference less than $k-t$ we need only show that this does not admit a red $k$-ap.  

Let
$a,a+d,\dots,a+(k-1)d$ be an arbitrary $k$-ap.  So that $a+(k-1)d \leq (k-t)(k-1)+1$ we must have $d < k-t$ or $d=k-t$ and $a=1$.
In the latter case, we see that $a$ is blue so that the $k$-ap is not red.  Hence, we may now assume $d<k-t$.
Since $k-t$ is prime, we have
$\gcd(d,k-t)=1$.  Hence, $xd \equiv 1-a \pmod{k-t}$ has a solution, say $x=j$,
with $0 \leq j \leq k-t-1$.  This means that $a+jd \equiv 1 \pmod{k-t}$ so that it has color blue.  Hence, for $d<k-t$ there
is no red $k$-ap.  
\end{proof}

\begin{rem} We can improve the lower bound to $(k-t)(k-1)+2t+1$, but that does not help below.
For $t=1$ this improved lower bound seems to be the exact value.
\end{rem}
This lemma allows us to state the following.

\begin{thm}\label{thm4} For $k \in \mathbb{Z}^+$, let $t \in \mathbb{Z}^+ \cup \{0\}$ be the minimal
integer such that $k-t$ is prime. Then $HJ(k,2) \geq k-t$.
\end{thm}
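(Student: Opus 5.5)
The plan is to exhibit a valid coloring of $[k]^{d}$ with $d=k-t-1$, that is, one with no red $k$-line and no blue $2$-line. This gives $HJ(k,2)>k-t-1$, i.e.\ $HJ(k,2)\ge k-t$. I will do this by pulling back the coloring from the proof of Lemma~\ref{lem3} along the van der Waerden mapping of Definition~\ref{def:vdwmap}, in the spirit of Lemma~\ref{genlem} but adapted to the off-diagonal setting.

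First, set $d=k-t-1$ and $N=1+d(k-1)$. Since $N\le (k-t)(k-1)+1$, Lemma~\ref{lem3} (more precisely the explicit coloring in its proof) gives a $2$-coloring of $[1,N]$ with two properties: no red $k$-ap, and no two blue integers at distance less than $k-t$. Restricting to the subinterval $[1,N]$ preserves both properties. Color each point $(a_1,\dots,a_d)\in[0,k-1]^d$ (identified with $[k]^d$ by shifting coordinates) by the color of $1+\sum a_i$.

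Second, I check the two forbidden configurations. Take a variable word $w(x)$ with variable set $S(w)$ and write $b=|S(w)|$. Then $w(j)$ maps to $a+bj$, where $a$ is $1$ plus the sum of the fixed coordinates. Thus the full combinatorial line maps to the $k$-ap $a,a+b,\dots,a+(k-1)b$ inside $[1,N]$, and this $k$-ap is not entirely red. Any $2$-line $\{w(j),w(j+1)\}$ maps to two integers differing by $b$. Since $1\le b\le d=k-t-1<k-t$, these two integers cannot both be blue. Hence the coloring of $[k]^{k-t-1}$ has no red $k$-line and no blue $2$-line.

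The only point needing care is exactly this second step: the difference $b$ is bounded by the dimension $d$. That bound is why $d=k-t-1$ is the right choice, since it makes $b<k-t$, which is the threshold enforced by the coloring modulo the prime $k-t$. Degenerate small cases, such as $d=0$, make the bound trivial. I would note them but not treat them separately.
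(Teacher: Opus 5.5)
Your proposal is correct and follows essentially the same route as the paper: pull back the coloring from Lemma~\ref{lem3} (the class $1$ modulo the prime $k-t$ is blue) along the van der Waerden mapping to $[k]^{k-t-1}$. A full line with $b$ variable positions then maps to a $k$-ap of difference $b$, and a $2$-line maps to two integers differing by $b\le k-t-1<k-t$. Your explicit restriction to $[1,1+(k-t-1)(k-1)]$ is only a cosmetic refinement, and the case $d=0$ you mention never actually arises, since $k-t\ge 2$.
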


\begin{proof} From Lemma \ref{lem3}, let $\chi$ be a 2-coloring of $[1,(k-t)(k-1)+1]$ with no red $k$-ap
and no two blue integers with difference less than $k-t$.  We use this to color the points in $[k]^{k-t-1}$.
By showing this coloring admits no red $k$-line and no blue $2$-line, we will have $HJ(k,2) \geq k-t$, thereby
proving the theorem.

Assuming the alphabet is $\{0,1,\dots,k-1\}$, using the van der Waerden mapping we give the point
$(a_1,a_2,\dots,a_{k-t-1}) \in [k]^{k-t-1}$ the color $\chi\left(1+\sum_{i=1}^{k-t-1} a_i\right)$.

Let $w(x)$ be an arbitrary variable word with $d \in \{1,2,\dots,k-t-1\}$ occurrences of the variable $x$ and
denote by $w_i(j)$ the $i$th integer in $w(j)$.
Let $I$ be the set of indices where $x$ does not occur.  Let $a=1+\sum_{I} a_i$.  Then
$\sum_{i=1}^{k-t-1} w_i(j) = a+jd$ for all $j \in \{0,1,\dots,k-1\}$.  By definition of $\chi$, these cannot all be
red, else we have a red $k$-ap in $[1,(k-t)(k-1)+1]$ under $\chi$.

To see that we have no blue $2$-line, assume that $w(j)$ and $w(j+1)$ are both blue.
Then $\sum_{i=1}^{k-t-1} (w_i(j+1)-w_i(j)) = d$.  Since $d \leq k-t-1$, we have a contradiction since
$\chi$ does not admit two blue integers with difference less than $k-t$.
\end{proof}

\begin{cor} If $p \in \mathbb{Z}^+$ is prime, then $HJ(p,2)=p$.
\end{cor}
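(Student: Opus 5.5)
The corollary follows by combining the upper bound of Theorem~\ref{thm1} with the lower bound of Theorem~\ref{thm4}.

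Let $p$ be prime and set $k=p$. For the upper bound, Theorem~\ref{thm1} applied with $k=p$ gives $HJ(p,2)\le p$ directly.

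For the lower bound, we apply Theorem~\ref{thm4} with $k=p$. The quantity $t$ there is the minimal nonnegative integer such that $k-t$ is prime. Since $k=p$ is itself prime, $t=0$ is admissible, and it is clearly minimal. Theorem~\ref{thm4} then yields $HJ(p,2)\ge p-0=p$.

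Combining the two inequalities gives $HJ(p,2)=p$. There is essentially no obstacle. The only point to check is that $t=0$ is allowed in Theorem~\ref{thm4}, and it is, since $t$ ranges over $\mathbb{Z}^+\cup\{0\}$.

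For completeness, we can also confirm that Lemma~\ref{lem3} behaves correctly in this boundary case, where the modulus $k-t$ equals $k$ itself. The argument there uses only that $k-t$ is prime, so that every difference $d<k-t$ is coprime to it. That holds with $k-t=p$, so the construction coloring the class $1 \bmod p$ blue remains valid on $[1,p(p-1)+1]$.
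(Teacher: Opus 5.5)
Your proposal is correct and is the same argument the paper intends: the corollary is stated right after Theorem~\ref{thm4}, and it combines that lower bound, with $t=0$ because $k=p$ is prime, with the upper bound $HJ(k,2)\le k$ from Theorem~\ref{thm1}. Your check that Lemma~\ref{lem3} still works when the modulus equals $p$ itself is also correct.
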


\begin{rem}\label{rem16} The van der Waerden mapping introduced in Definition~\ref{def:vdwmap} can be used in the off-diagonal situation and leads to the bound
$$
HJ(k,\ell) \geq \left\lfloor\frac{w(k,\ell)-2}{k-1}\right\rfloor+1, \quad k \geq \ell.
$$
This holds by taking a valid coloring of $[1,w(k,\ell)-1]$, using the van der Waerden mapping, and noting that
a monochromatic $\ell$-line maps to a monochromatic arithmetic progression.  Since
the coloring is valid, there is no monochromatic $k$-line of the first color and
no monochromatic $\ell$-line of the second color.
\end{rem}

 From Theorems \ref{thm1} and \ref{thm4} and a result of Ramanujan
\cite{Ram}, we have that $\frac{k}{2} \leq HJ(k,2) \leq k$ for all $k$ so that it is   linear, while, because
Green \cite{Gre} has shown that $w(k,3) > k^{c \left(\frac{\log k}{\log \log k}\right)^{1/3}}(1+o(1))$, we know that $HJ(k,3)$ grows faster
than any polynomial (see   Remark \ref{rem16}, above).

From \cite{BHP}, we know that in Theorem \ref{thm4} we have $t \leq k^{21/40} $ for sufficiently large $k$.
This allows us to state the following result.

\begin{thm} We have
$HJ(k,2) = k(1-o(1)).$
\end{thm}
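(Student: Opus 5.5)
The plan is to squeeze $HJ(k,2)$ between the two bounds already established and show that the gap is a lower-order term. Theorem \ref{thm1} gives the upper bound $HJ(k,2) \leq k$ for every $k$. For the lower bound, let $t = t(k) \geq 0$ be the least integer such that $k-t$ is prime. Then Theorem \ref{thm4} gives $HJ(k,2) \geq k - t(k)$. Together these give
\[
1 - \frac{t(k)}{k} \;\leq\; \frac{HJ(k,2)}{k} \;\leq\; 1 .
\]
So it suffices to show that $t(k)/k \to 0$ as $k \to \infty$.

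Note that $k - t(k)$ is the largest prime not exceeding $k$. Hence $t(k)$ is at most the gap between consecutive primes around $k$. At this point I would invoke the result from \cite{BHP}: for all sufficiently large $x$, the interval $[x - x^{21/40}, x]$ contains a prime. Applying this with $x = k$ gives $t(k) \leq k^{21/40}$ for all large $k$. Therefore
\[
\frac{t(k)}{k} \;\leq\; k^{-19/40} \to 0 .
\]
Combining this with the displayed sandwich yields $HJ(k,2) = k\bigl(1 - O(k^{-19/40})\bigr) = k(1 - o(1))$. This is in fact a quantitative strengthening of the claim.

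The one step that needs any care is the application of \cite{BHP}. We must make sure their statement applies to the interval ending exactly at $k$ (rather than, say, an interval starting at $k$), so that the prime $k - t$ lies at most $k$, as Theorem \ref{thm4} requires. If only a weaker form of the statement were available, it would still suffice to use the prime number theorem, or even Bertrand's postulate combined with the fact that $p_{n+1}/p_n \to 1$. Either of these gives $t(k) = o(k)$ directly, which is all the claim needs.

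Small $k$ pose no issue, since the statement is asymptotic. For $k = 1$ there is no prime $\leq k$, but the bound is only needed for large $k$.
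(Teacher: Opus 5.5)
Your proposal is correct and is essentially the paper's argument: you sandwich $HJ(k,2)$ between the upper bound $k$ of Theorem~\ref{thm1} and the lower bound $k-t$ of Theorem~\ref{thm4}, then use \cite{BHP} to get $t \le k^{21/40} = o(k)$ for large $k$. Your remark that the prime number theorem alone (via $p_{n+1}/p_n \to 1$) would already give $t(k)=o(k)$ is also valid, and it is a more elementary substitute for \cite{BHP} if only the qualitative statement is wanted.
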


  In Ramsey theory, lower bounds are typically easier; however, we have been unable to find a
lower bound matching our upper bound given in Theorem \ref{thm1}.  All computations ($k \in \{2,3,4,5,6,7\}$; see Table 3 in Section~\ref{sec:comp}) point to the following conjecture being true, although we have been unable to determine the values for $k \geq 8$.

\begin{conj} For all $k \in \mathbb{Z}^+$, we have $HJ(k,2)=k$.
\end{conj}

The difficult part of this conjecture is that when $k$ is composite, any underlying group structure
used (e.g., addition modulo $k$) necessarily has, by Cauchy's Theorem, an element with order
less than $k$ so that combinatorial lines transformed by the group's binary operation do not
necessarily hit every element of the group.  Hence, defining colorings based on a certain element
of the group will necessarily have lines that can avoid that element.  Furthermore, if we
select more than one element in the group, we have adjacent points on some combinatorial
lines that map to each element. 

If we view the hypercube as a poset with $x\preccurlyeq y$ meaning $y-x \in \{0,1\}^d$ and for those $i \in I$
where $y_i=x_i+1$, we have $x_i=x_j$ for all $i,j \in I$, we are searching
for an antichain $A$ such that every chain of length $k$ contains an element of $A$.
Since $(0,0,\dots,0), \dots, (k-1,k-1,\dots,k-1)$ is a maximal chain of length $k$, while
$(0,0,\dots,0), (1,0,0,\dots,0), \dots, (k-1,0,0,\dots,0), (k-1,1,0,\dots,0), \dots, (k-1,k-1,0,\dots,0),\dots, (k-1,k-1,\dots,k-1)$
is a maximal chain of length $dk$, this poset is not graded (since not all maximal chains have
the same length).  Hence, there is no consistent rank function that can be used  
to  define a valid coloring (since we would be, in this setting, searching
for a set of antichains that intersects certain chains of length $k$ and
elements with the same rank are the natural candidates).

\section{Combinatorial and Geometric Lines}

We next extend our approach to include geometric lines, which capture configurations beyond the classical setting.

\begin{defn}[CG-Word, CG-Line] For $k \in \mathbb{Z}^+$, let $v(x)$ represent a variable word
over the alphabet $\{0,1,\dots,k-1,x,k-1-x\}$ with at least one $x$ occurring (but $k-1-x$ need not occur).
We will call such a word a {\it cg-word}.
We will call the sequence $v(0), v(1),\dots,v(k-1)$ a {\it cg-line} (for combinatorial/geometric line).  A {\it partial cg-line} is defined
similarly to a partial combinatorial line.
Lines that include at least one instance of
$k-1-x$ are called {\it geometric lines}.

\end{defn}

\begin{defn}[CG-Hales-Jewett Number]\label{def20}
For positive integers $k_1,k_2,  \dots, k_r$
let $k = \max_i k_i$ and define the
{\it CG-Hales-Jewett number}
$HJ^*(k_1,k_2,\dots,k_r;r)$ to be the least integer $n$ such that every $r$-coloring of $[k]^n$ contains  
a monochromatic cg-line of color $i$ and length $k_i$ for some $i \in \{1,2,\dots,r\}$.
In the situation where all $k_i$'s are equal we use the notation 
$HJ^*(k;r)$.
When $r = 2$, we may write $HJ^*(k,\ell)$ in place of $HJ^*(k,\ell;2)$ for brevity.
\end{defn}

{If we divorce combinatorial and geometric lines,
is there a guarantee of a mono-chromatic geometric line without 
consideration for monochromatic combinatorial lines?
Yes.  We can  follow the proof of the Hales-Jewett Theorem
(see, e.g., \cite{Rob})
using geometric words instead of combinatorial words
and show that the base cases hold, where $G(k;1)=2$ and $G(2;r) \leq r+1$ are easy to show. 
We can also appeal to the Graham-Rothschild Theorem \cite{GR} by taking a 
diagonal of a guaranteed monochromatic hyperplane. This allows us to state the
following result.
}

\begin{thm}[Geometric Hales-Jewett]\label{thm21} Let $r \in \mathbb{Z}^+$ and $k_1,k_2,  \dots, k_r \in \mathbb{Z}^+$
with $k = \max_i k_i$.
There exists a minimal number 
$n=G(k_1,k_2,\dots,k_r;r)$ such that every $r$-coloring of $[k]^n$ contains  
a monochromatic geometric line of color $i$ and length $k_i$ for some $i \in \{1,2,\dots,r\}$. 
\end{thm}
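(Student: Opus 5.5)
The plan is to derive Theorem~\ref{thm21} directly from the Hales-Jewett Theorem (Theorem~\ref{thm2}) with a doubling trick, instead of rerunning the inductive proof with geometric words. Work with alphabet $\{0,1,\dots,k-1\}$ and set $m = HJ(k;r)$. I will show that every $r$-coloring of $[k]^{2m}$ contains a monochromatic geometric line of color $i$ and length $k_i$ for some $i$. This gives existence of a valid $n$, and the minimal one then exists by well-ordering.

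Given an $r$-coloring $\chi$ of $[k]^{2m}$, define an $r$-coloring $\chi'$ of $[k]^m$ by
\[
\chi'(p_1,\dots,p_m) = \chi(p_1,\dots,p_m,\,k-1-p_1,\dots,k-1-p_m).
\]
By Theorem~\ref{thm2}, $\chi'$ has a monochromatic combinatorial line $L(w)$ for some variable word $w(x)$, say of color $i$. Form the $2m$-letter word $v(x) = (w(x),\, \overline{w}(x))$, where $\overline{w}$ replaces each constant $c$ by $k-1-c$ and each occurrence of $x$ by $k-1-x$. Then $v$ is a cg-word: it contains at least one $x$ (since $w$ does) and at least one $k-1-x$ (in the same positions, shifted by $m$). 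Hence $v$ defines a geometric line. Moreover $v(a)$ is exactly the point whose $\chi$-color is $\chi'(w(a))$, so the full geometric line $v(0),\dots,v(k-1)$ is monochromatic of color $i$.

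Restricting to the values $a\in\{0,\dots,k_i-1\}$ gives a partial geometric line of length $k_i\le k$ in color $i$, as required. If one wants a sharper statement in the off-diagonal setting, the same argument works with $m$ replaced by the off-diagonal number $HJ(k_1,\dots,k_r;r)$ of Definition~\ref{def11}. That number exists because it is at most $HJ(k;r)$, since a full monochromatic line of color $i$ contains a $k_i$-line of color $i$. A monochromatic combinatorial $k_i$-line of color $i$ then maps to a partial geometric $k_i$-line of color $i$ under $w\mapsto v$.

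Finally, I will check the ``for all larger dimensions'' behaviour implicit in calling the number minimal. For $n' \ge n$, restrict a coloring of $[k]^{n'}$ to the slice where the last $n'-n$ coordinates are fixed at $0$. A geometric line in that slice, extended by the constant coordinates, is still a geometric line in $[k]^{n'}$, so the property is monotone in $n$.

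The only delicate point, and the step I expect to need the most care, is verifying that the image of a combinatorial line under the reflection-doubling map really is a \emph{geometric} cg-line in the sense of the definition. This needs $x$ and $k-1-x$ to occur simultaneously and consistently, and partial lines to correspond to partial lines with the same index window. Once that is checked, everything else follows immediately from Theorem~\ref{thm2}.
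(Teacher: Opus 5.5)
Your argument is correct, and it takes a genuinely different route from the paper. The paper gives only a sketch. It proposes either rerunning the inductive proof of the Hales--Jewett theorem with geometric words in place of combinatorial ones (with base cases $G(k;1)=2$ and $G(2;r)\le r+1$), or invoking the Graham--Rothschild theorem to get a monochromatic combinatorial subspace and taking a diagonal of the form $u(x,k-1-x)$ inside it.

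Your reflection-doubling coloring $\chi'(p)=\chi(p,\bar p)$ instead uses the classical one-dimensional theorem as a black box. The check $\bar w(a)=\overline{w(a)}$ is immediate, $v=(w,\bar w)$ contains both $x$ and $k-1-x$, and index windows are preserved. This gives the explicit bound $G(k_1,\dots,k_r;r)\le 2\,HJ(k_1,\dots,k_r;r)\le 2\,HJ(k;r)$, with the off-diagonal case coming for free.

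In effect you have ``pre-folded'' the Graham--Rothschild idea. Rather than needing a monochromatic two-dimensional subspace and reading off its anti-diagonal, you build the reflection into the coloring, so a single monochromatic combinatorial line suffices. This is far lighter machinery. It also sidesteps checking that the inductive proof really adapts to geometric words. That check is not entirely routine, because the reflection $a\mapsto k-1-a$ does not preserve the sub-alphabet $\{0,\dots,k-2\}$ used in the usual induction step.

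As a side benefit, combining your bound with the paper's $HJ(k,2)\le k$ gives $G(k,2)\le 2k$, a reverse-direction counterpart to Question D. What the paper's inductive route would buy, if carried out, is a proof independent of the classical theorem, together with sharper small cases such as $G(2;r)\le r+1$. Your bound gives only $G(2;r)\le 2r$, though it does recover $G(k;1)\le 2$.
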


As previously done, in the situation when $k=k_1=k_2=\cdots=k_r$
we use the notation $G(k;r)$; furthermore,
when $r = 2$, we  write $G(k,\ell)$ in place of $G(k,\ell;2)$ for brevity.

Returning to cg-lines, it is known that Tic-Tac-Toe with 2 players on a
$3 \times 3 \times 3$ board always has a winner, regardless
of player strategy (see, e.g., Exercise 3.3 in \cite{Beck}).
This is also given by our computation that verifies this: $HJ^*(3;2)=3$; see Table \ref{tab:cghj} in Section 6.

We note that $HJ^*(3;r)$ is the true high-dimensional $r$-player Tic-Tac-Toe analogue. This is because certain winning lines (e.g. $\{(1,3), (2,2), (3,1)\}$ on $[3]^2$) are geometric lines but not combinatorial lines. This explains why $HJ^*(3;2) = 3$ allows us to state that Tic-Tac-Toe cannot end in a tie on a $3 \times 3 \times 3$
board even though $HJ(3;2)=4$..

Computational results on $HJ^*(k;2)$ for small values
of $k$ can be found in Section \ref{sec:comp}.
We can also offer the following formulas for
$(k,r) \in \{(k,1), (2,r)\}$.

\begin{lemma}\label{lem13}  We have
$HJ^*(k;1)=1$  
and
$HJ^*(2;r) = \lceil \log_2 (r+1)\rceil$.
\end{lemma}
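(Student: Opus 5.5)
The plan is to argue directly from the definition of a cg-line in both cases. For $k=1$ the question is trivial, and for $k=2$ I will show that a cg-line of length $2$ is nothing more than an arbitrary pair of distinct points. That turns the problem into a pigeonhole count.

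\emph{The case $HJ^*(k;1)=1$.} Take $n=1$ and the cg-word $v(x)=x$. It gives the cg-line $v(0),\dots,v(k-1)$, which is all of $[k]^1$. Under a $1$-coloring this line is monochromatic, so $n=1$ already forces a monochromatic cg-line of length $k$. Since dimensions are positive integers, $1$ is the least such $n$.

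\emph{The case $k=2$.} Use the alphabet $\{0,1\}$. The key claim is that any two distinct points $p,q\in[2]^n$ form a cg-line, i.e. $\{p,q\}=\{v(0),v(1)\}$ for some cg-word $v$. Let $D=\{i : p_i\neq q_i\}$, which is nonempty. Define $v$ by setting $v_i=p_i$ for $i\notin D$, and for $i\in D$ setting $v_i=x$ if $p_i=0$ and $v_i=1-x$ if $p_i=1$. Then $v(0)=p$ and $v(1)=q$.

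The only point to check is that $v$ contains at least one $x$, as the definition of a cg-word requires. If every $i\in D$ has $p_i=1$, swap the roles of $p$ and $q$; since $q_i=0$ on $D$, this produces an $x$. Conversely, every cg-word trivially gives two distinct points $v(0)\neq v(1)$.

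It follows that an $r$-coloring of $[2]^n$ avoids monochromatic cg-lines exactly when it is injective. Such a coloring exists iff $2^n\le r$. Hence every coloring has a monochromatic cg-line iff $2^n\ge r+1$, and the least such $n$ is $\lceil \log_2(r+1)\rceil$. For $r=1$ this gives $1$, consistent with the first part.

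The only step needing care is the nonemptiness of the $x$-positions in the constructed cg-word, which the swap of $p$ and $q$ handles. Everything else is routine counting.
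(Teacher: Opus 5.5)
Your proof is correct and follows the paper's argument. Both show that every pair of distinct points of $\{0,1\}^n$ is a cg-line, using the same $x$/$1-x$ assignment, so a line-free coloring must be injective, and then count. Your swap of $p$ and $q$ to guarantee an occurrence of $x$ fills a small gap the paper leaves: its construction yields $(1-x,1-x)$ for $p=(1,1)$, $q=(0,0)$. Also, you compute the least $n$ with $2^n\ge r+1$ directly, where the paper squeezes the value between $\lfloor \log_2 r\rfloor$ and $\lceil \log_2(r+1)\rceil$.
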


\begin{proof}
The fact that $HJ^*(k;1)=1$ is trivial.
For $HJ^*(2;r)$, first
note that every pair of points are on a unique cg-line
by the following.  In $\{0,1\}^d$, let $p$ and $q$ be points.  We will build a word $w(x)$ based on them.
If $p_i=q_i$ let $w_i=p_i$; if $p_i<q_i$ then let $w_i=x$; if $p_i>q_i$ then let $w_i=1-x$.
Note that $w(0)=p$ and $w(1)=q$.  

Hence, if any two points have the same color, we have a monochromatic cg-line.
This means that every color can only be used once if we want to avoid
a monochromatic line.  Since there are $2^d$ points in $\{0,1\}^d$,
we must have at least $2^d$ colors to avoid a monochromatic geometric line.
Hence, $HJ^*(2;2^d) > d$ while $HJ^*(2;2^d-1) \leq d$. Thus,
$HJ^*(2;r) > \lfloor \log_2 r \rfloor $ and $HJ^*(2;r) \leq \lceil \log_2 (r+1)\rceil$.  Hence,
$$
\lfloor \log_2 r \rfloor < HJ^*(2;r) \leq \lceil \log_2 (r+1)\rceil,
$$
Now let $k=\lfloor \log_2 r \rfloor $ so that $2^k \leq r < 2^{k+1}$.  Then $2^k<r+1 \leq 2^{k+1}$.
Thus $k+1=1+\lfloor \log_2 r \rfloor =\lceil \log_2 (r+1)\rceil$.
Hence, $HJ^*(2;r)=\lceil \log_2 (r+1)\rceil$.
\end{proof}

In \cite{BPV} the following
bound on the CG-Hales-Jewett number is given: 
$$HJ^*(k;2) > \frac{2^{\frac{k}{4}}}{3k^4}(1+o(1)),$$ 
as well
as the statement that $HJ^*(4;2)\geq 5 $ is the best known lower bound (as of 2009).
We have discovered no other published lower bound on $HJ^*(4;2)$. Using computational methods as described in Section~\ref{sec:comp}, we have an improved lower bound of $HJ^*(4;2) \geq 7$, along with a newly discovered lower bound  $HJ^*(3;3)\geq 7$, meaning 3-player Tic-Tac-Toe requires at least 7 dimensions
to guarantee a winner.

We can improve the asymptotic lower bound 
on $HJ^*(k;2)$ given in \cite{BPV}.  We cannot use the  van der Waerden mapping   since certain geometric lines
(those with an equal number of forward and backward variables) map to a constant and hence
are monochromatic under the van der Waerden mapping.

\begin{thm} For $k \geq 2$, we have
$$
HJ^*(k;2) > \frac{2^{\frac{k-1}{3}}-1}{(k-1)^{\frac23}}.
$$
\end{thm}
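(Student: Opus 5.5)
The plan is a first-moment (union bound) argument. The van der Waerden mapping cannot be used directly, as noted just before the statement, so the randomness is applied after compressing the hypercube through a map that sends every cg-line to a sequence with many distinct values. Concretely, I will look for a map $\phi$ from $\{0,\dots,k-1\}^n$ into a set $T$ of size about $M=n(k-1)^{2/3}+1$ (or into a product of such ranges) with three properties.

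\begin{itemize}
\item[(i)] For every cg-word $v(x)$, the values $\phi(v(0)),\dots,\phi(v(k-1))$ are pairwise distinct.
\item[(ii)] The image sequence $a\mapsto\phi(v(a))$ belongs to a family $\mathcal{F}$ of at most $M^3$ sequences.
\item[(iii)] Each sequence in $\mathcal{F}$ is determined by three parameters.
\end{itemize}

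Given such a $\phi$, color $T$ uniformly at random with 2 colors and pull the coloring back to $[k]^n$.

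The key steps, in order, are as follows. First I write each cg-word as its fixed part together with $f\ge 1$ occurrences of $x$ and $b\ge 0$ occurrences of $k-1-x$. Additive statistics then take the form $\sum_i h(v_i(a)) = C + f\,h(a) + b\,h(k-1-a)$. Taking $h(j)=j$ gives $C+(f-b)a+b(k-1)$, and taking $h(j)=j^2$ gives a genuine quadratic in $a$ whose leading coefficient is $f+b\ge1$. So with $\phi$ built from the first two power sums (suitably packed into one integer range, or a pair), every image sequence is a polynomial of degree at most $2$ in $a$. Such a sequence is determined by three of its values, which gives (ii) and (iii) with at most about $M^3$ sequences.

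Second, the union bound. The expected number of monochromatic image sequences is at most $|\mathcal{F}|\cdot 2^{1-k}$. This is less than $1$ when $\bigl(n(k-1)^{2/3}+1\bigr)^3<2^{k-1}$, which rearranges exactly to $n<\frac{2^{(k-1)/3}-1}{(k-1)^{2/3}}$. In that range some coloring of $T$ has no monochromatic image sequence, hence $[k]^n$ has no monochromatic cg-line, and therefore $HJ^*(k;2)>n$.

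The main obstacle is (i) together with the exact range size. The quadratic statistic on its own is symmetric under $a\leftrightarrow k-1-a$ when $f=b$, and the linear statistic is constant in that case. Lines with $f=b$ therefore need both coordinates, and I must check that the pair $(\sum p_i,\sum p_i^2)$ separates the points of such a line. If it does not, I will fall back to a cubic power sum and adjust the parameter count. The other part I expect to need care is arranging the packing of the statistics so that the total count is bounded by $(n(k-1)^{2/3}+1)^3$ rather than a cruder product like $n^3(k-1)^3$. I would first try balancing the ranges of the three determining values, each of size about $n(k-1)^{2/3}$, and then fix the constants to match the stated bound.
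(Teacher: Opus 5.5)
Your scheme is the paper's. You compress $[k]^n$ by a coordinate-symmetric additive statistic; the paper uses $1+\sum a_i^2$, as in \cite{BPV}. A cg-word with $f$ copies of $x$ and $b$ copies of $k-1-x$ then yields the sequence $C+fa^2+b(k-1-a)^2$. You count these sequences and apply the union bound with $2^{1-k}$. Your count is fine if you take it from the parameters $(C,f,b)$: there are at most $((n-1)(k-1)^2+1)\binom{n+1}{2}\le (n(k-1)^{2/3}+1)^3$ of them, matching the paper's $M^3/(k-1)^4$ with $M=1+n(k-1)^2$. It does not come from $|T|^3$, since here $|T|=1+n(k-1)^2$.

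The gap is property (i), and your plan for it cannot work. When $f=b$, pair the $x$-positions with the $(k-1-x)$-positions. Swapping paired coordinates carries $v(a)$ to $v(k-1-a)$, so these two points are coordinate permutations of each other. Every coordinate-symmetric statistic therefore takes the same value on them: $\sum p_i$, $\sum p_i^2$, $\sum p_i^3$, any packing of these, and any $\sum_i h(p_i)$. The check you planned fails, and the cubic fallback fails for the same reason. With $\sum p_i^2$ alone there are further collisions whenever $2b(k-1)/(f+b)\in\mathbb{Z}$; for example $k=4$, $f=2$, $b=1$ gives $C+9$ at both $a=0$ and $a=2$.

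Breaking the symmetry with a weighted statistic $\sum w_ip_i$ does not rescue (ii). Injectivity on all $f=b$ words forces distinct equal-size sets of coordinates to have distinct weight sums. Then the words with $f=b=\lfloor n/4\rfloor$ alone produce exponentially many (in $n$) different slopes, so the image family is exponential in $n$.

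Without (i), a sequence with $m$ distinct terms is monochromatic with probability $2^{1-m}$, and for $f=b$ the quadratic statistic gives $m=\lceil k/2\rceil$. There are on the order of $n^2(k-1)^2$ such sequences. So the first-moment argument with this map certifies $n$ only up to roughly $2^{k/4}/(k-1)$, which is the exponent of the \cite{BPV} bound, not $2^{(k-1)/3}$.

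Be aware that the paper's proof does not supply the missing idea either. It uses the same map and count, and simply asserts that each progression $a+ix^2+j(k-1-x)^2$ is monochromatic with probability $2^{1-k}$. That tacitly assumes your (i), which is false for $i=j$. Your proposal therefore reproduces the paper's argument together with its unjustified step. Reaching the stated rate needs a genuinely new ingredient for the lines whose images repeat values. That could be a compression that is injective on $f=b$ lines while keeping a family polynomial in $n(k-1)^2$, or a separate treatment of those lines.
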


\begin{proof} Since we have $HJ^*(2;2) = 2, HJ^*(3;2) = 3$, and $HJ^*(4;2) \geq 7 $, we may assume $k \geq 5$.
With a goal of using Lemma \ref{genlem}, as done in \cite{BPV} we map $(a_1,a_2,\dots,a_d) \in [0,k-1]^d$ to $1+\sum_{i=1}^d a_i^2$.
As a result of this mapping, the combinatorial and geometric lines are mapped to
$$
a+ix^2+j(k-1-x)^2
$$
for some $i \geq 1$ and $j \geq 0$, with $i+j \leq d$.

We now consider 2-colorings of $[1,n]$ with no monochromatic sequence
of the above form.  Letting $M(k)$ be the largest such integer $n$,
via computer (program {\tt HJ.f}\footnote{Available at the third author's website {\tt https://math.colgate.edu/$\sim$aaron}}), we have
$$
\begin{array}{l|l}
k&M(k)\\\hline
2&2 \\
3&15 \\
4& 56\\
5& \geq 133 \\
\end{array}
$$

We first show that $M(k) \geq 4(k-1)^2-1$ for $k \geq 5$ as this will be used below.  To see this, consider
the 2-coloring that colors $[1,(k-1)^2-1] \cup [2(k-1)^2, 3(k-1)^2-1]$ red and its
complement in $[1,4(k-1)^2-1]$ blue.

Consider $a+ix^2+j(k-1-x)^2$ for $x=0,1,\dots,k-1$.  For $i \geq j$, the largest term of
the progression is $a+i(k-1)^2$.  
If $a \in [1,(k-1)^2]$ then, since $i \geq 1$, we have
that the largest term of the progression must be in $[2(k-1)^2, 3(k-1)^2-1]$ so that
$i=2$.
The largest possible difference between consecutive terms of the sequence
is $(2k-3)i - j \leq (2k-3)i = 4k-6$.  Since $(k-1)^2 > 4k-6$ for $k\geq 5$, some term of the
progression must lie in $[(k-1)^2, 2(k-1)^2-1]$ so that the progression is not red.
If $a \in [2(k-1)^2, 3(k-1)^2-1]$, then $a+(k-1)^2\geq 3(k-1)^2$ so that the progression
is not red.

For $i<j$, the analysis is very similar and left to the reader, as is the fact that there
is no blue progression.

We now turn to the basic probabilistic method to produce a lower bound.
Since the probability of a given progression being monochromatic when the integers are
randomly colored is $2^{1-k}$, if we let $n(k)$ be the number of progressions in $[1,M]$
and determine $M=M(k)$ such that $n(k)2^{1-k} < 1$, then we have the existence of a 2-coloring
with no such monochromatic progressions.

We next determine $n(k)$.  We have
$$
n(k) = \sum_{i=1}^d \sum_{j=0}^{d-i} \sum_{a=1}^{\min(M-i(k-1)^2,\, M-j(k-1)^2} 1.
$$
Hence, we evaluate
$$
\sum_{i=1}^{d/2} \sum_{j=0}^{i} \sum_{a=1}^{M-i(k-1)^2} 1 + \sum_{i=d/2}^{d} \sum_{j=0}^{d-i} \sum_{a=1}^{M-i(k-1)^2} 1+ \sum_{i=1}^{d/2} \sum_{j=i}^{d-i} \sum_{a=1}^{M-j(k-1)^2} 1.
$$
A tedious, but routine calculation gives us 
$$
n(k) \leq \frac{M(d+1)^2}{2} - (k-1)^2 \frac{d^3}{12}.
$$
Now, we know that $1 \leq d \leq \frac{M-1}{(k-1)^2}$ so that the progression lies inside $[1,M]$.  Using these, we have 
$$
n(k)\leq \frac{M^3}{2(k-1)^4}+\frac{M^2}{(k-1)^2} + \frac{M}{2} - (k-1)^2 \frac{d^3}{12} \leq \frac{M^3}{(k-1)^4} -  \frac{(k-1)^2}{12},
$$
where the last inequality holds
provided $M\geq (1+\sqrt{2})((k-1)^2+1)$, which holds since $(1+\sqrt{2})(k-1)^2 < 4(k-1)^2-1
\leq M$
for all $k \geq 3$.

Hence, if 
$$
 \frac{\frac{M^3}{(k-1)^4} -  \frac{(k-1)^2}{12}}{2^{k-1} }< 1
$$
then we have $n(k)2^{1-k} < 1$ meaning there exists a 2-coloring of $[1,M]$ with no monochromatic progression.  This occurs for
$$
M = 2^{\frac{k-1}{3}}(k-1)^{\frac43}.
$$

We now use the mapping for $HJ^*(k;2)$ and note that the largest value under the mapping is $1+d \cdot (k-1)^2$.
Hence, if
$M \geq 1+ d \cdot (k-1)^2$ then we obtain a valid coloring of $[k]^d$.  This gives
$$
HJ^*(k;2) > \frac{M-1}{(k-1)^2} = \frac{2^{\frac{k-1}{3}}}{(k-1)^{\frac23}}-\frac{1}{(k-1)^2}>
\frac{2^{\frac{k-1}{3}}}{(k-1)^{\frac23}}-\frac{1}{(k-1)^{\frac23}} =\frac{2^{\frac{k-1}{3}}-1}{(k-1)^{\frac23}}.
$$
\end{proof}

\subsection{Connect Four}

As we have computed $HJ^*(4;2) \geq 7$, in order to
have a 4-in-a-row winner in a 2-player game on a hypercube of side length $4$
we require at least 7 dimensions.  But what if we
allow the side length to be larger than 4?

The classic game Connect Four by Milton Bradley is played with red and black
checkers on a $7 \times 6$ board.  If you have any playing
experience with this game, you have probably ended in a
tie before, even though it is known \cite{Allis} that the
first player has a winning strategy.  Here we investigate when
we are guaranteed a winner, regardless of playing strategy.
For simplicity, we assume a board with equal side lengths.

\begin{defn}[Connect Four Number]\label{def24} For $k,r \in \mathbb{Z}^+$, let the {\it Connect Four number} $CF(k;r)$ be
the minimal dimension $d$  such that for any $n \geq d$, every $r$-coloring
of the points in $[k]^n$ admits a monochromatic partial cg-line
of length 4.
\end{defn}

It is easy to see that $CF(k;2)>2$ for all $k$ by the following
2-coloring \small
$$\begin{array}{cccccc}
0&1&0&1&0&\cdots\\
0&1&0&1&0&\cdots\\
0&1&0&1&0&\cdots\\
1&0&1&0&1&\cdots\\
1&0&1&0&1&\cdots\\
1&0&1&0&1&\cdots\\
0&1&0&1&0&\cdots\\
0&1&0&1&0&\cdots\\
0&1&0&1&0&\cdots\\
\vdots&\vdots&\vdots&\vdots&\vdots&\ddots
\end{array}
$$\normalsize
so that the 2-dimensional game may end in a tie regardless of
the board size.  

With the help of the Maple program {\tt ConnectFour}\footnote{Available at the third author's website {\tt https://math.colgate.edu/$\sim$aaron}} we also have $CF(k;2)>3$ for all $k$
via the following coloring.  For $(a,b,c) \in [k]^3$, define
$f(a,b,c)=a+3b+5c \pmod{11}$.  If $f(a,b,c) \in \{0,1,5,7,8,10\}$ we
color $(a,b,c)$ red; otherwise, we color it blue.  The Maple
program constructs all combinatorial and geometric 4-lines 
and shows all such 4-lines are bichromatic.  Hence, a 3-dimensional
version of Connect Four can end in a tie regardless of board size, in contrast to
3-dimensional Tic-Tac-Toe which must always have a winner on a
$3 \times 3 \times 3$ board.

For higher dimensions, we turn to SAT solvers (see Section \ref{sec:comp}) to
show the following result, where we find that Connect Four 
can end in a tie in 5 dimensions on any sized board.

As explained in the next proof, we provide a method
for coloring the entire 5-dimensional space based
on a finite hypercube.

{
\begin{thm}  
We   have $CF(k;2) \geq 6$ for all $k$.
\end{thm}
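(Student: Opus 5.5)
The plan is to exhibit, for every $k$, a 2-coloring of $[k]^5$ with no monochromatic partial cg-line of length 4. I would build it as a periodic coloring of $\mathbb{Z}^5$ that only depends on the residues of the coordinates modulo some $m$, so $\chi(p)=\chi'(p \bmod m)$ for a coloring $\chi'$ of the torus $(\mathbb{Z}/m)^5$. This is the natural generalization of the 3-dimensional coloring $a+3b+5c \pmod{11}$. Restricting such a coloring to $[k]^5$ (for any $k$) gives a valid coloring as long as the periodic coloring has no monochromatic 4-term pattern of the relevant shape. The dimension bound also propagates downward for free: a valid coloring of $[k]^5$ restricts to every $[k]^n$ with $n\le 5$ via a slice, so only the condition at dimension $5$ is needed to conclude $CF(k;2)\ge 6$.

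First I would reduce the infinite family of 4-lines to a finite check. A partial cg-line of length 4 in $[k]^5$ is $v(a),v(a+1),v(a+2),v(a+3)$ for a cg-word $v$. Each coordinate is either constant $c$, increasing ($x$), or decreasing ($k-1-x$). So the four points are $q, q+s, q+2s, q+3s$, where $s\in\{-1,0,1\}^5$ has at least one $+1$. By the symmetry of swapping the roles of $x$ and $k-1-x$, requiring at least one nonzero entry is what matters. Reducing modulo $m$, it suffices to check that for every base point $q\in(\mathbb{Z}/m)^5$ and every nonzero direction $s\in\{-1,0,1\}^5$ (up to sign), the four residues $q,q+s,q+2s,q+3s$ are not monochromatic under $\chi'$. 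This is a finite condition of size roughly $m^5\cdot 3^5/2$, independent of $k$. Because it ranges over all base points $q$, it holds for every board size, including $k<m$.

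Second, I would find $\chi'$ using a SAT encoding, in the spirit of Section~\ref{sec:comp}. There is one Boolean variable per torus point, and for each (base, direction) pair two clauses forbid the four points from being all true or all false. I would first try the restricted linear ansatz $\chi'(p)=[\,\sum c_i p_i \bmod m \in R\,]$, as in dimension 3. That reduces each direction $s$ to a step $\sum c_i s_i \bmod m$ in $\mathbb{Z}/m$. The requirement becomes: for every nonzero achievable step $t$ and every $u$, the set $\{u,u+t,u+2t,u+3t\}$ meets both $R$ and its complement. The step $t$ can be zero when $\sum c_is_i\equiv0$, which forces every achievable step to be nonzero mod $m$; that makes this ansatz potentially too weak in 5 dimensions. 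If it fails I would drop linearity and let the SAT solver work on the full torus $(\mathbb{Z}/m)^5$ for small $m$ (say $m\le 8$), possibly imposing cyclic or coordinate-permutation symmetry to shrink the instance.

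The main obstacle is actually finding a satisfying periodic coloring. The linear ansatz likely cannot avoid zero steps with many coordinates, and the full torus instance grows like $m^5$ variables. Once a coloring is found, the verification is the finite, $k$-independent check described above, which can be done by computer as with the Maple program for dimension 3.
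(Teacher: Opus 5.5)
There is a genuine gap. Your reduction is correct, and it is the paper's strategy: a periodic coloring of $\mathbb{Z}^5$, found by a SAT solver, restricted to $[k]^5$. The problem is that the proposal stops exactly where the proof's content begins.

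You reduce the theorem to the existence of a coloring $\chi'$ of $(\mathbb{Z}/m)^5$ that passes your finite check, but you never show such a coloring exists. You describe a search (linear ansatz first, then a full SAT instance with $m\le 8$) and yourself call finding the coloring ``the main obstacle.'' The paper's proof is precisely the successful outcome of such a search: Kissat produced a 2-coloring of $[15]^5$ none of whose wrap-around $4$-lines is monochromatic. Without an exhibited, verified coloring nothing is proved.

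Success is also not a formality. For $m=3$ your $4$-sets collapse to the affine lines of $\mathbb{F}_3^5$, so both color classes would have to be cap sets. That is impossible, because the largest cap set in $\mathbb{F}_3^5$ has $45<243/2$ points. The paper's successful run used period $15$, outside your suggested range. (Incidentally, zero steps are not what limits the linear ansatz: coefficients $1,2,4,8,16$ modulo any $m\ge 32$ make every step nonzero. The real question is whether a suitable $R$ exists.)

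Your torus condition (every base point, all $121$ directions up to sign) is strictly stronger than the check the paper describes. The paper's check runs only over wrap-around partial cg-lines of $15$-cg-words, where the increasing coordinates equal $x$ and the decreasing ones equal $14-x$ modulo $15$. So you could not simply borrow the paper's coloring.

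On the other hand, your condition is the one that handles every $k$ uniformly. A partial cg-line in $[k]^5$ containing both $x$ and $k-1-x$ has increasing and decreasing coordinates summing to $k-1$. After reduction modulo $15$ it is among the paper's checked configurations only when $k\equiv 0\pmod{15}$, whereas your all-base-point check does not depend on $k$. Your framework is therefore the more robust one, but it becomes a proof only once a coloring satisfying it is actually found and verified.
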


\begin{proof}  We start by explaining why $CF(k;2) \geq 6$ for all $k$. Let $w(x)$ be a 5-dimensional cg-word.  Via the
SAT solver Kissat, we discover a 2-coloring of
$[15]^5$ for which none of
$$
(w(i \!\!\!\mod{15}), w(i+1 \!\!\!\mod{15}), w(i+2 \!\!\!\mod{15}), w(i+3 \!\!\!\mod{15})), \quad 1 \leq i \leq 15,
$$
are monochromatic.  Hence, we can color all points in
$(\mathbb{Z}^+)^5$ with the discovered coloring by
giving $(x_1,x_2,\dots,x_5)$ the color
$(x_1 \!\!\!\pmod{15}, x_2\!\!\!\pmod{15}, \dots, x_5 \!\!\!\pmod{15})$ and be guaranteed that no monochromatic
cg-line of length 4 exists.  Thus, given
$k \in \mathbb{Z}^+$, by considering this
coloring restricted to $[k]^5$, we have
$CF(k;2) \geq 6$ for any $k$.
\end{proof}

\begin{rem} Our computer run for $k=6$ in 6 dimensions did not
finish before our allotted time ran out.  It ran for 600 hours with
no solution.
\end{rem}

}

\section{Computational Results}\label{sec:comp}

Given a Boolean formula $\varphi: \{0,1\}^n \rightarrow \{0,1\}$, the Boolean Satisfiability (SAT) problem seeks to determine whether there exists an assignment to its variables under which the formula evaluates to true.

While SAT is NP-complete, SAT solvers use Conflict-Driven Clause Learning (CDCL) to often solve instances far faster than what worst-case complexity bounds would suggest. Consequently, problems that can be reduced to SAT may be approached efficiently using these methods.

The problem of determining whether there exists an $r$-coloring of $[k]^n$ that avoids monochromatic combinatorial lines can be reduced to SAT. The specific encoding depends on the parameters $k$ and $r$; we outline the reduction for the cases $r = 2$ and $r = 3$.

Let $\mathcal{L}$ denote the set of combinatorial lines in $[k]^n$, and let $L = \{w(1), w(2), \dots, \break w(k)\} \in \mathcal{L}$ denote a combinatorial line.

\noindent{\bf Notation.}
For $L \subseteq [k]^n$ and color $c \in [r]$ define the Boolean clauses
\[
\mathbf{b}_{L,c} := \bigvee_{i \in [k]} b_{w(i),c},
\qquad
\neg \mathbf{b}_{L,c} := \bigvee_{i \in [k]} \neg b_{w(i),c},
\]  
where $b_{w(i),c}=1$ if $w(i)$ has color $c$ and equals 0 otherwise.

These clauses, respectively, encode that at least one point in $L$ receives color $c$ and that not all points in $L$ receive color $c$.

\subsection{Encoding Two Colors}
When $r = 2$, each point $x \in [k]^n$ is assigned a Boolean variable $b_{x,1}$. The Boolean formula encoding $HJ(k;2) > n$ is
\[
\bigwedge_{L \in \mathcal{L}}
\left[
\neg \mathbf{b}_{L,1}
\;\wedge\;
\mathbf{b}_{L,1}
\right].
\]

The clause $\neg \mathbf{b}_{L,1}$ forbids monochromatic lines of the first color, while $\mathbf{b}_{L,1}$ forbids monochromatic lines of second color.

\subsection{Encoding More than Two Colors}
When $r = 3$, each point $x \in [3]^n$ is assigned a Boolean triple $(b_{x,1}, b_{x,2}, b_{x,3})$. The Boolean formula is
\[
\bigwedge_{L \in \mathcal{L}} \Bigg[
\bigwedge_{c \in [3]} \neg \mathbf{b}_{L,c}
\;\wedge\;
\bigwedge_{i \in [k]} 
\left( \bigvee_{c \in [r]} \mathbf{b}_{w(i),c}\right)
\Bigg].
\]

The clause $\neg \mathbf{b}_{L,c}$ forbids a monochromatic line of color $c$, whereas the clauses $\displaystyle \bigvee_{c \in [r]} \mathbf{b}_{w(i),c}$ guarantee point $w(i)$ is assigned a color.

Encodings for $r \geq 4$ follow by straightforward extensions of these constructions. Similarly, Hales-Jewett variants can be encoded via natural modifications of the $HJ(k;r)$ formulation. For further details on the implementation, see our GitHub Repository \cite{Repo}. 

\subsection{Some Values and Bounds}

With the computational approach outlines above in this section, we obtain new exact values and bounds for several Hales-Jewett variants. Our computations were carried out using the SAT solvers Kissat \cite{Kissat} and  CryptoMiniSat \cite{CryptoMiniSat}. The results are summarized in Tables~\ref{tab:cghj} and \ref{tab:offdiaghj}.

\begin{table}[ht!]
\centering
\renewcommand{\arraystretch}{1.0}
\small
\begin{tabular}{>{\centering\arraybackslash}m{1.0cm}|
                >{\centering\arraybackslash}m{1.5cm}
                >{\centering\arraybackslash}m{1.7cm}
                >{\centering\arraybackslash}m{1.7cm}
                }
\toprule
{\boldmath${k}$}  & $G(k;2)$ & $HJ^*(k;2)$&$HJ^*(k;3)$ \\
\midrule
\textbf{2}   & 3 & 2& 2  \\
\textbf{3}   & 5 & 3 & $\geq 7$ \\
\textbf{4}  &  $\geq 8 $ & $\geq 7$ & $\geq 10$ \\
\bottomrule
\end{tabular}
\caption{Values and bounds for geometric and CG-Hales-Jewett numbers}
\label{tab:cghj}
\end{table}

\begin{table}[ht!]
\centering
\renewcommand{\arraystretch}{1.2}
\small
\begin{tabular}{>{\centering\arraybackslash}m{1.2cm}|
                >{\centering\arraybackslash}m{1.5cm}
                >{\centering\arraybackslash}m{1.5cm}
                >{\centering\arraybackslash}m{1.5cm}
                >{\centering\arraybackslash}m{1.5cm}
                >{\centering\arraybackslash}m{2.0cm}
                >{\centering\arraybackslash}m{2.0cm}}
\toprule
{\boldmath${k}$}  & $HJ(k,2)$ & $G(k,2)$ & $HJ^*(k,2)$\\
\midrule
\textbf{2} & 2 & 3 & 2 \\
\textbf{3}  & 3 & 4 & 2 \\
\textbf{4} & 4 &  4& 3 \\
\textbf{5} & 5 & 6  & 4 \\
\textbf{6} & 6 &  6 & 4  \\
\textbf{7}  & 7& $\geq 8$ & 5 \\
\textbf{8}  & $\geq 7$  & 7 & 4 \\
\textbf{9}   & $\geq 7$ &  $\geq 9$ & $\geq 6$ \\
\textbf{10}  & $\geq 7$ & $7$ & 5  \\
\bottomrule
\end{tabular}
\caption{Values and bounds for 2-color off-diagonal Hales-Jewett variants}
\label{tab:offdiaghj}
\end{table}

Additionally, we have the following values and bounds for the 3-color and 4-color off-diagonal Hales-Jewett numbers. We have shown:
\[HJ(3,2,2;3) = 5 \quad \text{ and }\quad  HJ(3,3,2;3) = 9;\] and
\[HJ(3,2,2,2;4) = 7 \quad \text{ and } \quad HJ(3,3,2,2;4) \geq 11.\]

\begin{rem}
Hales-Jewett SAT instances exhibit unusual branching behavior. Disabling variable activity updates via the Kissat \texttt{--no-bump} flag leads to a substantial reduction in conflicts, especially for  $HJ(3;3)$ instances. Tables \ref{tab:hj33vsids} and \ref{tab:hj42vsids} outline these results.
(The values in the No Bump column of Table \ref{tab:hj33vsids} are not typos.)
Note that, in general, fewer conflicts leads to quicker completion.

\begin{table}[H]
\centering

\begin{minipage}[b]{0.48\textwidth}
\centering
\begin{tabular}{c|cc}
\toprule
 & \multicolumn{2}{c}{Conflicts} \\
$n$ & Baseline & No Bump \\
\midrule
7  & 76     & 97 \\
8  & 810K   & 258 \\
9  & 3.55M  & 693 \\
10 & 568M   & 1.31M \\
\bottomrule
\end{tabular}\caption{Kissat Conflicts on $HJ(3;3)$}\label{tab:hj33vsids}
\end{minipage}
\hfill
\begin{minipage}[b]{0.48\textwidth}
\centering
\begin{tabular}{c|cc}
\toprule
 & \multicolumn{2}{c}{Conflicts} \\
$n$ & Baseline & No Bump \\
\midrule
7  & 226K  & 12.8K \\
8  & 721K  & 249K \\
9  & 7.61M & 1.97M \\
\bottomrule
\end{tabular}\caption{Kissat Conflicts on $HJ(4;2)$}\label{tab:hj42vsids}
\end{minipage}

\end{table}

Further investigation of this phenomenon may lead to more efficient SAT solutions for Hales-Jewett instances.

Our results demonstrate that computational methods, particularly SAT-based approaches, provide a viable pathway for advancing the study of Hales-Jewett numbers. We are optimistic that the interplay between combinatorial structure and computation will continue to yield new insights into these computationally difficult problems.

\end{rem}

\section{Open Questions}

Several of the bounds obtained in this work may not be tight, leaving room
for improvement. We conclude by highlighting  open problems raised by this work.

\noindent{\bf Conjecture A.}
For all $k \in \mathbb{Z}^+$, we have $HJ(k,2)=k$.

\noindent{\bf Conjecture B.}
For all $k \in \mathbb{Z}^+$, we have $HJ^*(k,2) < k$.
 
{
 \noindent{\bf Question C.} What is the minimum dimension $d$ such that
 there exists $k \in \mathbb{Z}^+$ such that
the 2-player Connect Four game always has a winner on the $[k]^d$ board? 
 }

 {
 \noindent{\bf Question D.}
Does the following hold for all $k$: $HJ(k,2) \leq G(k,2)$?
Note that if this is true, then Conjecture A is false since
$G(8,2)=G(10,2)=7$.
 }
 
 {
 \noindent{\bf Question E.}
 What can be said about $2$-colorings of discrete hypercubes that either contain
 a combinatorial line of length $k$ of the first color or a geometric line of
 length $\ell$ of the second color?  In particular, do these numbers always exist?
 }

\vskip 20pt
\noindent
\textbf{Acknowledgments.} This work was supported in part by the Colgate University Research Computing and Data services and the Colgate Supercomputer (partially supported by NSF grant OAC-2346664).

\end{document}